\numberwithin{equation}{section}
\newtheorem{theorem}{Theorem}[section]
\newtheorem{lemma}[subsection]{{\bf Lemma}}
\newtheorem{remark}[subsection]{Remark}
\newcommand{\al}{\alpha}
\newcommand{\be}{\beta}
\newcommand{\ga}{\gamma}
\newcommand{\la}{\lambda}
\newcommand{\Z}{\mbox{$\mathbb Z$}}
\newcommand{\Q}{\mbox{$\mathbb Q$}}
\begin{document}

\title[Sum of terms of recurrences in the solutions of Pell equations]{Sum of terms of recurrence sequences in the solution sets of generalized Pell equations} %\\ \today}

\author[P. K. Bhoi]{P. K.  Bhoi}
\address{Pritam Kumar Bhoi, Department of Mathematics, National Institute of Technology Rourkela, Odisha-769 008, India}
\email{pritam.bhoi@gmail.com}

\author[R. Padhy]{R. Padhy}
\address{Rudranarayan Padhy, Department of Mathematics, National Institute of Technology Calicut, 
Kozhikode-673 601, India.}
\email{padhyrudranarayan1996@gmail.com}

\author[S. S. Rout]{S. S. Rout}
\address{Sudhansu Sekhar Rout, Department of Mathematics, National Institute of Technology Calicut, 
Kozhikode-673 601, India.}
\email{sudhansu@nitc.ac.in; lbs.sudhansu@gmail.com}

\thanks{2020 Mathematics Subject Classification: Primary 11B37, Secondary 11D61, 11D09. \\
Keywords: Linear recurrence sequence, generalized Pell equation, $S$-unit equation}

\begin{abstract}
Let $(X_{k})_{k\geq 1}$ and $(Y_k)_{k\geq 1}$ be the sequence of $X$ and $Y$-coordinates of the positive integer solutions $(x, y)$ of the equation $x^2 - dy^2 = t$. In this paper, we completely describe those recurrence sequences such that the sums of two terms of recurrence sequences in the solution sets of generalized Pell equations are infinitely many. Further, we give an upper bound for the number of such terms when there are only finitely many of them. This work is motivated by the recent paper of Hajdu and Sebesty{\'e}n (Int. J. Number Theory 18 (2022), 1605-1612).
\end{abstract}

\maketitle
\pagenumbering{arabic}
\pagestyle{headings}

\section{Introduction}

Let $d, t$ be nonzero integers with $d > 1$ square-free, and consider the equation
\begin{equation}\label{gpelleqn}
x^2 - dy^2 = t
\end{equation}
in integers $x, y$. If $t = \pm 1, \pm4$, then \eqref{gpelleqn} is called a Pell equation, while in case of general $t$ it is a generalized Pell equation. Furthermore, setting $(x_1, y_1)$ for the smallest such solution of the Pell equation $x^2 - dy^2 =1$, all other solutions are of the form $(x_m, y_m)$ where
\[x_m+\sqrt{d}y_m = (x_1+\sqrt{d}y_1)^m\quad \mbox{for}\quad m \geq 1.\]

Recently several mathematicians have investigated the following type of problem related to solution sets (i.e., $X$ and $Y$-coordinates) of Pell equation solutions.  Assume that $\mathbb{U}:= (U_n)_{n\geq 0}$ is some interesting sequence of positive integers and $\{(x_m, y_m)\}_{m\geq 1}$ are sequence of solutions of Pell equation \eqref{gpelleqn}. What can one say about the number of solutions of the containment $x_m\in \mathbb{U}$ for a generic $d$? What about the number of solutions of the containment $y_m\in \mathbb{U}$? For most of the binary recurrent sequences(Fibonacci numbers \cite{ birlucatogbe2018, lucatogbe2018}, tribonacci numbers \cite{luca 2017}, rep-digits in some given integer base $b\geq 2$ \cite{dlt2016, fl2018, glz2020}), the equation $x_m\in \mathbb{U}$ has at most one positive integer solution $m$ for any given $d$ except for finitely many values of $d$. This is best possible which follows from the fact that if $u\in \mathbb{U}\setminus \{1\}$, then $(x, y) = (u, 1)$ is a solution to $x^2-dy^2=1$ for $d:= u^2-1$. Similarly, if $\mathbb{U}$ contains $1$ and infinitely many even numbers then there are infinitely many $d$ such that $y_m \in \mathbb{U}$ has two positive integer solutions in $m$. If $\mathbb{U}$ is the value set of a binary recurrent sequence then $y_m\in \mathbb{U}$ has at most two solutions $m$ provided $d$ exceeds some computable bound depending on $\mathbb{U}$ (see \cite{efgl2022, fl2020, fl3, lucazottor2023} ). For other related questions in this direction, one can refer (\cite{bravo2018, dd2020}).

On the other hand, determining perfect powers and sum of $S$-units in linear recurrence sequences is another interesting problem that has received a lot of attention. Erazo et al. \cite{egl2020} show that there are only a finite number of non-square  integers $d>1$ such that there are at least two different elements of the sequence $(X_m)_{m \geq 1}$, which is $x$-coordinate of \eqref{gpelleqn} with $t=\pm 1$, that can be represented as a linear combination of prime powers with fixed primes and coefficients. For other related result, refer (\cite{egl2020,  egl2021, hs} and the references therein. 

Fuchs and Heintze \cite{fh2021} proved results concerning values of recurrence sequences in the coordinates of solutions of general norm form equations. Hajdu and Sebesty{\'e}n \cite{hs2022} completely describe those recurrence sequences which have infinitely many terms in either of the $x, y$ coordinates of the solution sets of generalized Pell equations, i.e., of \eqref{gpelleqn} with arbitrary $t$. Furthermore, they obtained an upper bound for the number of solutions in the case where there are only finitely many solutions. 

In this paper, we study a problem related to sum of terms recurrence sequences which have infinitely many terms in the solution sets of generalized Pell equations. More precisely, we give an upper bound for the number of such terms when there are only finitely many of them. 

\section{Notation and Main Result}
Let $a_1,\dots, a_k \in \Z$ with $a_k\neq 0$ and $U_0,\dots,U_{k-1}$ are integers not all zero and let $(U_{n})_{n \geq 0}$ be a $k^{th}$ order linear recurrence sequence defined by
\begin{equation}\label{eq4}
U_{n} = a_1U_{n-1} + \dots +a_kU_{n-k}.
\end{equation}
Let $\alpha_1,\dots,\alpha_q$ be the distinct roots of the corresponding characteristic polynomial  
\begin{equation}\label{eq5}
f(x):= x^k - a_1x^{k-1}-\dots-a_k.
\end{equation} 
Then for $n\geq 0$, we have 
\begin{equation}\label{eq6}
	U_n=f_1(n)\alpha_1^n +\cdots + f_q(n)\alpha_{q}^n,
\end{equation}
where $f_i(n)$ are nonzero polynomials with degrees less than the multiplicity of $\alpha_i$; the coefficients of $f_i(n)$ are elements of the field $\Q(a_1,\dots, a_k, U_0,\dots,U_{k-1}, \alpha_1, \ldots, \alpha_q)$. 

The sequence $(U_n)_{n \geq 0}$ is called {\it degenerate} if there are integers $i, j$ with $1\leq i< j\leq q$ such that $\alpha_i/\alpha_j$ is a root of unity; otherwise it is called {\it non-degenerate}. The sequence $(U_n)_{n \geq 0}$ is called {\it simple} if $q=k$. In this case, \eqref{eq6} becomes
\begin{equation}\label{eq6a}
	U_n=\eta_1\alpha_1^n +\cdots + \eta_k\alpha_{k}^n,
\end{equation}
where $\eta_i$'s are constants. We write $X$ and $Y$ for the sets of solutions of \eqref{gpelleqn} in $x\in \Z$ and $y\in \Z$, respectively. We prove the following theorem.
 \begin{theorem}\label{th1}
Let $(U_n)_{n \geq 0}$ be a simple non-degenerate linear recurrence sequence
of integers of order $k$ as in \eqref{eq6a}, such that its characteristic polynomial is not of the form $x^2 + ax \pm 1$ with $(a^2 \mp 4)/d$ being a square in $\Q$. Further, assume that the roots of the characteristic polynomial $\alpha_i, (1\leq i \leq k)$ are pairwise multiplicatively independent and none of the root is a root of unity.  Then 
\begin{equation}\label{eqmain}
U_{n_1} + U_{n_2} \in X \cup Y
\end{equation}
holds only for finitely many indices $(n_1, n_2)$. Further, the number of such values $(n_1, n_2)$ is bounded by $c_1 = c_1(k, d, t)$, where $c_1$ is an effectively computable constant depending only on $k, d, t$.
\end{theorem}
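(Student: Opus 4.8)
The plan is to turn the membership \eqref{eqmain} into a single exponential Diophantine equation and then into a vanishing $S$-unit relation whose solutions can be counted by the quantitative theory of $S$-unit equations.

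First I would record the arithmetic of the solution set. Let $\varepsilon = x_1 + \sqrt{d}\,y_1 > 1$ be the fundamental solution of $x^2 - dy^2 = 1$, so that $\varepsilon\bar\varepsilon = 1$. The solutions of \eqref{gpelleqn} fall into finitely many classes, the number of which is bounded in terms of $d$ and $t$; in each class one has $x + \sqrt d\, y = \gamma\varepsilon^{m}$ for a fixed representative $\gamma\in\Q(\sqrt d)$ and $m\in\Z$. Passing to conjugates shows that every element of $X$ has the shape $\tfrac12(\gamma\varepsilon^{m}+\bar\gamma\varepsilon^{-m})$ and every element of $Y$ the shape $\tfrac1{2\sqrt d}(\gamma\varepsilon^{m}-\bar\gamma\varepsilon^{-m})$; in either case this is a binary expression $W_m = A\varepsilon^{m}+B\varepsilon^{-m}$ with fixed $A,B\neq 0$. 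Since $|W_m|\to\infty$, each value of $W_m$ is attained for at most boundedly many $m$, so it suffices to bound, for each of the finitely many classes, the number of triples $(n_1,n_2,m)$ with
\[
\sum_{i=1}^{k}\eta_i\alpha_i^{n_1}+\sum_{i=1}^{k}\eta_i\alpha_i^{n_2}=A\varepsilon^{m}+B\varepsilon^{-m}.
\]

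Next I would view this as the vanishing sum
\[
\sum_{i=1}^{k}\eta_i\alpha_i^{n_1}+\sum_{i=1}^{k}\eta_i\alpha_i^{n_2}-A\varepsilon^{m}-B\varepsilon^{-m}=0
\]
of $2k+2$ terms, each a fixed nonzero constant times an element of the finitely generated group $\Gamma:=\langle\alpha_1,\dots,\alpha_k,\varepsilon\rangle$, whose rank is at most $k+1$. Normalising by one term and applying the quantitative subspace theorem in the form of the Evertse--Schlickewei--Schmidt bound for $S$-unit equations --- which bounds the number of \emph{non-degenerate} solutions (those admitting no vanishing proper subsum) purely in terms of the number of terms and the rank of $\Gamma$, and in particular independently of the underlying field and of the coefficients $\eta_i,A,B$ --- yields a bound of the form $c(k)$ for the non-degenerate triples. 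Here the hypothesis that no $\alpha_i$ is a root of unity guarantees that distinct triples $(n_1,n_2,m)$ give distinct tuples of group elements, so that counting tuples indeed counts triples.

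The main obstacle is the analysis of the \emph{degenerate} solutions, in which some proper subsum of the $2k+2$ terms vanishes. Here I would run through the partitions of the term set into minimal vanishing subsums. A subsum involving only $\alpha$-powers, such as $\eta_i\alpha_i^{n_1}+\eta_j\alpha_j^{n_2}=0$ or a longer relation, forces a multiplicative dependence among the $\alpha_i$; the pairwise multiplicative independence together with the non-root-of-unity assumption pins the relevant exponents down to boundedly many values, so such subsums contribute only finitely, and controllably, many triples. The delicate subsums are those mixing $\alpha$-powers with $\varepsilon^{\pm m}$, which could in principle hold for a whole one-parameter family: this happens precisely when the roots of the characteristic polynomial are, up to sign, the Pell unit $\varepsilon$ and its inverse, for then the diagonal $n_1=n_2$ makes $2U_n$ lie in $X\cup Y$ for every $n$. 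This is exactly the configuration excluded by the hypothesis that the characteristic polynomial is not $x^2+ax\pm1$ with $(a^2\mp4)/d$ a square in $\Q$, since the roots of such a polynomial are precisely the units of $\Q(\sqrt d)$ generated by $\varepsilon$; ruling it out leaves only finitely many degenerate triples, again bounded in terms of $k$. Summing the non-degenerate and degenerate contributions over the finitely many classes then gives the effective bound $c_1(k,d,t)$, as claimed.
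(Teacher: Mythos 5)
Your proposal is correct and follows essentially the same route as the paper: reduce $X\cup Y$ to finitely many binary recurrences $A\varepsilon^{m}+B\varepsilon^{-m}$ coming from the solution classes, view \eqref{eqmain} as a vanishing sum of $2k+2$ exponential terms, bound the non-degenerate solutions by the quantitative subspace/unit-equation machinery (the paper uses the equivalent Schlickewei--Schmidt partition formalism with the condition $G(\pi)=\{0\}$), and eliminate vanishing proper subsums using non-degeneracy, pairwise multiplicative independence, the non-root-of-unity assumption, and the hypothesis on the characteristic polynomial. The one point to tighten is your claim that a persistent mixed subsum occurs ``precisely when the roots are, up to sign, $\varepsilon$ and its inverse'': what the degeneracy actually yields is a relation $\varepsilon^{r}=\alpha_i^{s}$ (and its conjugate), forcing $\alpha_i,\alpha_j$ to be conjugate quadratic \emph{units} of the ring of integers of $\Q(\sqrt d)$ --- not necessarily $\pm\varepsilon^{\pm1}$ --- which is exactly the configuration of a characteristic polynomial $x^2+ax\pm1$ with $(a^2\mp4)/d$ a square that the theorem excludes, and this is the discriminant argument the paper carries out in its final four cases.
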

We have given several remarks below where we have shown that there are infinitely many indices $(n_1, n_2)$ of \eqref{eqmain} if one of the assumptions in Theorem \ref{th1} is not satisfied.  

\begin{remark}
Consider the linear recurrence sequence of order three defined by the recurrence relation
\begin{equation}\label{eqrec101}
U_{n+3} = 3U_{n+2} -3U_{n+1} + 2U_{n},\quad n\geq 0
\end{equation}
with initial values $U_0 = 0, U_1 = 1$ and $U_2 = 1$. The characteristic polynomial of $U_n$ is $x^3 -3x^2 +3x-2$ and its roots are $2, (1\pm \sqrt{3}i)/2$.  Take the Pell equation 
\begin{equation}\label{eqrem4}
x^2 - 2y^2 = 1.
\end{equation}
 Then there are infinitely many $(n_1, n_2)$ such that $U_{n_1}+U_{n_2}=2\in Y$, where $Y$ is  solution set of \eqref{eqrem4} in $y\in \Z$. Note that here the given recurrence in \eqref{eqrec101} is degenerate.
\end{remark}

\begin{remark}
Take $d = 3$ and consider the classical Pell equation
\begin{equation}\label{eqrem1}
x^2 - 3y^2 = 1.
\end{equation}
As it is well-known, its positive solutions are given by
\[x + \sqrt{3}y = (2 + \sqrt{3})^m \quad (m \geq 1).\]
Let $U_0 = 0$, $U_1 = 1$ and
\begin{equation}\label{rec}
U_{n+2} = 4U_{n+1} - U_n \quad (n \geq 0).
\end{equation}
There are infinitely many $(n_1, 0)$ such that $U_{n_1}+U_{n_2} = U_{n_1} \in Y$, where $Y$ is  the solution set of \eqref{eqrem1} in $y\in \Z$. On the other hand, here $a = -4$, and $(a^2 - 4)/3 = 4$ is a full square, yielding that the roots of the characteristic polynomials $x^2 - 4x + 1$ are units of the ring of integers of $\Q(\sqrt{3})$.
\end{remark}

\begin{remark}
Consider the Pell equation
\begin{equation}\label{eqrem3}
x^2 - 5y^2 = 1.
\end{equation}
Its positive solutions are given by
\[x + \sqrt{5}y = (9 + 4\sqrt{5})^m \quad (m \geq 0).\]
Let 
\begin{equation}\label{rec2}
U_n = 1^n-(-1)^n \quad (n \geq 0).
\end{equation}
There are infinitely many $(n_1, n_2)$ such that $U_{n_1}+U_{n_2}\in Y$, where $Y$ is  the solution set of \eqref{eqrem3} in $y\in \Z$. Note that both the roots are root of unity.
\end{remark}

Following type of example is not included in \cite{hs2022}.
\begin{remark}
Consider the linear recurrence sequence $U_n = 4U_{n-1} - 3 U_{n-2}$ with $U_0 =2$ and $U_1 = 2$. The characteristic polynomial of $U_n$ is $x^2-4x+3$ and the roots are $3$ and $1$. So, $U_n = 2$ for all $n\geq 0$. Take the Pell equation 
\begin{equation}\label{eqrem5}
x^2 - 5y^2 = -4.
\end{equation}
Then there are infinitely many $(n_1, n_2)$ such that $U_{n_1}+U_{n_2}= 4\in X$, where $X$ is the  solution set of \eqref{eqrem4} in $x\in \Z$. Here one of the roots of the characteristic polynomial of the recurrence sequence is a root of unity. 
\end{remark}

Our method of proof follows closely the general approach from \cite{hs2022}.

\section{Auxiliary results}

Let $x_1$ and $y_1$ be the smallest positive solutions (in $x$ and $y$, respectively) of the equation
\begin{equation}\label{pelleqn}
 x^2 - dy^2 = 1.
\end{equation}
It is well known that (see \cite[p.354]{niven1991}) all positive integer solutions $x, y$ of \eqref{pelleqn} are given by \[x + \sqrt{d}y =(x_1 + \sqrt{d}y_1)^m\] for some $m \geq 1$. The sequence $(x_k)_{k\geq 1}$ and $(y_k)_{k\geq 1}$ are in fact binary recurrence sequences. The following lemma shows that the sets of the coordinates of the
solutions of \eqref{gpelleqn} are unions of finitely many non-degenerate binary linear recurrence sequences.

\begin{lemma}\label{lem2}
 Let $x_1$ be the smallest positive solution of the equation \eqref{pelleqn}. If \eqref{gpelleqn} has a solution, then all its solutions are given by
\[(x, y) =(G^{(i)}_n ,H^{(i)}_n) \quad (i = 1, . . . , I)\]
with some binary recurrence sequences
\[G^{(i)} = G^{(i)}(2x_1,-1,G^{(i)}_0 ,G^{(i)}_1 ),\quad H^{(i)} = H^{(i)}(2x_1, -1,H^{(i)}_0 ,H^{(i)}_1 ).\]
Here $I$ and $G^{(i)}_0 ,G^{(i)}_1 ,H^{(i)}_0 ,H^{(i)}_1 $ $(i = 1, \ldots , I)$ are some positive integers
with $I < c_2$ and
\begin{equation}
|G^{(i)}_j|, |H^{(i)}_j| < c_3 \quad (0 \leq j \leq 1, 1 \leq i \leq I),
\end{equation}
where $c_2$ is an effectively computable constant depending only on $t$, while $c_3$ is an effectively computable constant depending only on $d$ and $t$.
\end{lemma}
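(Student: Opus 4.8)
The plan is to pass from the Diophantine equation to the arithmetic of the quadratic order $\Z[\sqrt d]$ and phrase everything in terms of norm‑$t$ elements acted on by units. To a solution $(x,y)$ of \eqref{gpelleqn} I associate $\xi = x + y\sqrt d$, whose norm is $N(\xi) = \xi\,\overline{\xi} = x^2 - dy^2 = t$, where $\overline{\phantom{x}}$ denotes the conjugation $\sqrt d \mapsto -\sqrt d$. The element $\varepsilon := x_1 + y_1\sqrt d$ is then a unit of norm $+1$, and by the description of the Pell solutions recalled above the norm‑$1$ units of $\Z[\sqrt d]$ are precisely $\{\pm\varepsilon^m : m\in\Z\}$. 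Since multiplication by a unit preserves the norm, this group acts on the set $\mathcal S_t$ of norm‑$t$ elements, and I declare $\xi \sim \xi'$ whenever $\xi' = \pm\varepsilon^{m}\xi$ for some $m$. This partitions $\mathcal S_t$ into orbits, and the entire lemma amounts to (i) bounding the number of orbits and the size of one representative in each, and (ii) recognizing each orbit as a pair of binary recurrences.

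For the counting, note that $\xi'\overline{\xi} = (xx' - dyy') + (x'y - xy')\sqrt d$ and $\xi'/\xi = \xi'\overline{\xi}/t$; hence $\xi \sim \xi'$ forces $\xi'\overline{\xi} \equiv 0 \pmod{t}$ in $\Z[\sqrt d]$, so the orbit of $\xi$ is determined by the residue pair $(x \bmod t,\, y \bmod t)$ subject to $x^2 \equiv dy^2 \pmod t$. The number of admissible residue classes, and therefore the number of orbits $I$, is thus bounded by a quantity depending only on $t$, giving $I < c_2 = c_2(t)$. For the representatives I would invoke the classical theory of fundamental solutions of generalized Pell equations (see e.g.\ \cite{niven1991}): each orbit contains a solution whose coordinates satisfy explicit inequalities of the shape
\[
0 \le |H_0| \le \frac{y_1}{\sqrt{2(x_1+1)}}\sqrt{|t|}, \qquad 0 \le |G_0| \le \sqrt{\tfrac{(x_1+1)|t|}{2}},
\]
with the analogous estimates when $t<0$. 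As $x_1,y_1$ are determined by $d$, these bounds yield $|G^{(i)}_j|,|H^{(i)}_j| < c_3 = c_3(d,t)$, and the signs of the coordinates can be normalized to make the representatives positive.

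It then remains to extract the recurrence. Since $\varepsilon + \overline{\varepsilon} = 2x_1$ and $\varepsilon\,\overline{\varepsilon} = N(\varepsilon) = 1$, the unit $\varepsilon$ (and likewise $\overline{\varepsilon} = \varepsilon^{-1}$) is a root of $z^2 - 2x_1 z + 1$. Fixing a fundamental representative $\xi_0 = G_0 + H_0\sqrt d$ of an orbit and writing $\varepsilon^{\,n}\xi_0 = G_n + H_n\sqrt d$, both coordinate sequences satisfy the common relation $W_n = 2x_1 W_{n-1} - W_{n-2}$; that is, they are exactly the binary recurrences $G = G(2x_1,-1,G_0,G_1)$ and $H = H(2x_1,-1,H_0,H_1)$ of the statement, and they are non‑degenerate because $\varepsilon/\overline{\varepsilon} = \varepsilon^2$ is not a root of unity. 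To recover the full orbit $\{\pm\varepsilon^{m}\xi_0 : m\in\Z\}$ as sequences indexed by $n \ge 0$, I split according to the two signs and to $m \ge 0$ versus $m < 0$, using $\varepsilon^{-1} = \overline{\varepsilon}$ and the symmetry of the recurrence (which runs equally in either direction); this replaces each orbit by at most four one‑sided sequences, enlarging $I$ only by a bounded factor. Gathering these over all orbits produces the claimed parametrization $(x,y) = (G^{(i)}_n, H^{(i)}_n)$.

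The substantive part is the second paragraph: establishing the finiteness of the orbits with a bound uniform in $d$ (depending on $t$ alone) and the explicit coordinate bounds for the fundamental solutions. The congruence criterion $\xi'\overline{\xi}\equiv 0 \pmod t$ is what makes the uniformity in $d$ possible, and the fundamental‑solution inequalities are the classical but delicate ingredient. By contrast, the passage to recurrences in the third paragraph is purely formal once $\varepsilon$ is identified as a root of $z^2 - 2x_1 z + 1$.
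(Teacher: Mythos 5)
Your proposal is correct, and it is essentially the proof behind the result: the paper itself only cites Lemma~3.2 of Hajdu--Sebesty\'en, which rests on exactly this classical machinery of unit orbits (classes) of norm-$t$ elements, Nagell-type bounds for the fundamental solution in each class, and the observation that $\varepsilon$ satisfies $z^2-2x_1z+1=0$. The one point to tighten is the orbit count in your second paragraph: what you actually need there is the converse implication --- that two solutions with the same residue pair $(x \bmod t,\,y \bmod t)$ lie in the same orbit, because $\xi'\overline{\xi}/t$ is then an element of $\Z[\sqrt d]$ of norm one and hence a unit $\pm\varepsilon^{m}$ --- you have all the ingredients for this but state the implication in the unneeded direction (and note the harmless sign slip $\xi'\overline{\xi}=(xx'-dyy')+(xy'-x'y)\sqrt d$).
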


\begin{proof}
See Lemma 3.2 in \cite{hs}.
\end{proof}

\begin{remark}\label{auxrem1}
By Lemma \ref{lem2}, we can say that $X \cup Y$ is the union of at most $2c_2$ binary recurrence sequences $(V_m)$ satisfying
\begin{equation}\label{vm}
V_{m+2} = 2x_1V_{m+1} - V_m \quad (m \geq 0).
\end{equation}
with some $V_0, V_1$ satisfying
\[|V_0|, |V_1| \leq c_3.\]
Here $c_2$ and $c_3$ are the constants appearing in Lemma \ref{lem2}. Since $x_1 > 1$, by \eqref{eq6} we get that
\[V_m=B\be^m+C\ga^m \quad (m \geq 0),\]
where $\be, \ga$ are the (real) roots of the polynomial $x^2 - 2x_1x + 1$ and $B,C$ are nonzero conjugate elements of $\Q(\sqrt{d})$. In particular, $\be, \ga$ are units and conjugates in $\Q(\sqrt{d}).$
\end{remark}

We need a result  concerning the finiteness of the solutions of so-called polynomial-exponential equations (see \cite{ss2000}). For its formulation, we need to introduce some further notation.

Consider the equation
\begin{equation}\label{smith}
\sum_{l=1}^{r}P_l(\bf{x})\al_{\it{l}}^{\bf{x}} = 0
\end{equation}
in variables ${\bf{x}} = (x_1, . . . , x_s) \in \Z^s,$ where the $P_l$ are polynomials with coefficients in an algebraic number field $K$, and
\[\al^{\bf{x}}_l = \al^{x_1}_{l_1} \cdots \al^{x_s}_{l_s}\]
with given nonzero $\al_{l_1}, \ldots ,\al_{l_s} \in K
 \quad (l = 1, \ldots , r).$
Let $\pi$ be a partition of the set $\{1, \ldots , r\}$. The sets $\lambda\subset \{1, \ldots, r\}$ belonging to $\pi$ will be considered to be elements of $\pi$: we write $\lambda \in \pi$. Given $\pi$, the set of equations
\begin{equation}\label{ref}
\sum_{l\in \la}P_l(\bf{x})\al_{\it{l}}^{\bf{x}} = 0, \quad \la \in \pi
\end{equation}
yields a refinement of \eqref{smith}. When $\pi'$ is a refinement of $\pi$, then system of equations corresponding to the partition $\pi'$ implies system of equations corresponding to the partition $\pi$. Let $S(\pi)$ be the set of solutions of \eqref{ref} which are not solutions of \eqref{ref} with any proper refinement $\pi'$ of $\pi$. 

Set $i \overset{\pi}{\sim} j$ if $i$ and $j$ lie in the same subset $\la$ belonging to $\pi$. Let $G(\pi)$ be the
subgroup of $\Z^s$ consisting of $\bf{z}$ with
\[\al_i^{\bf{z}} = \al_j^{\bf{z}}\] 
for any $i, j$ with $i \overset{\pi}{\sim} j$.

\begin{lemma}\label{gp}
Using the above notation, if $G(\pi) = \{0\}$ then we have
\[S(\pi)<2^{35A^3}D^{6A^2}\]
with $D = \deg(K)$ and
\[ A = \max \left(s, \sum_{l\in \Lambda}\binom{s+\delta_l}{s}\right),\]
where $\delta_l$ is the total degree of the polynomial $P_l.$
\end{lemma}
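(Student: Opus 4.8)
My plan is to reduce the polynomial-exponential system \eqref{ref} to finitely many \emph{pure} $S$-unit (linear) equations and then to count the non-degenerate solutions of those equations via the quantitative Subspace Theorem of Evertse--Schlickewei--Schmidt; this is the strategy of \cite{ss2000}, and I indicate only its architecture here.

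First I would expand each coefficient polynomial into monomials, writing $P_l(\mathbf{x}) = \sum_\mu c_{l\mu}\,\mathbf{x}^\mu$ with $\mathbf{x}^\mu = x_1^{\mu_1}\cdots x_s^{\mu_s}$ and total degree $|\mu| \le \delta_l$. The number of such monomials in $s$ variables of degree at most $\delta_l$ is $\binom{s+\delta_l}{s}$, so after expansion each equation of \eqref{ref} is a linear relation among at most $A = \sum_l \binom{s+\delta_l}{s}$ generalized terms $c_{l\mu}\,\mathbf{x}^\mu\,\al_l^{\mathbf{x}}$. This is precisely why the term count $A$ governs the size of the bound.

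Next, for a block $\la \in \pi$ I would normalize the subsum $\sum_{l\in\la}P_l(\mathbf{x})\al_l^{\mathbf{x}} = 0$ by dividing through by one pivot term, turning it into a linear equation whose unknowns are the ratios $\al_i^{\mathbf{x}}/\al_j^{\mathbf{x}}$ (which lie in a finitely generated subgroup of $K^\times$) weighted by monomial ratios. The hypothesis $G(\pi) = \{0\}$ is exactly what forces the map $\mathbf{x}\mapsto(\al_i^{\mathbf{x}}/\al_j^{\mathbf{x}})_{i\overset{\pi}{\sim}j}$ to be injective on the relevant lattice, so that distinct $\mathbf{x}\in S(\pi)$ yield distinct points; and membership in $S(\pi)$, i.e. no proper refinement of $\pi$ solves the system, means no proper subsum vanishes, which is precisely the non-degeneracy hypothesis under which the Subspace Theorem returns a bound \emph{uniform} in the heights of the $\al_l$ and of the coefficients.

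Finally I would apply the quantitative Subspace Theorem to one such normalized block equation, which by the above is a genuinely non-degenerate linear unit equation; since every $\mathbf{x}\in S(\pi)$ solves it, a bound on its solutions in terms of the number of summands and the degree $D = \deg(K)$ already bounds $S(\pi)$. Accounting for the monomial terms, which raise the effective number of summands to $A$, and tracking the constants through the theorem then yields $2^{35A^3}D^{6A^2}$. I expect the main obstacle to be the polynomial factors themselves: because $\mathbf{x}^\mu$ is not a multiplicative function of $\mathbf{x}$, the passage to pure exponential equations is not automatic and forces one to absorb the monomials as extra exponential terms, the mechanism that inflates the effective term count to $A$, while simultaneously keeping every constant explicit. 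That combined reduction-and-bookkeeping is the technical heart of \cite{ss2000}, from which the stated estimate follows.
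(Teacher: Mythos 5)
The paper does not prove this lemma at all---it is quoted directly as Theorem 1 of Schlickewei--Schmidt \cite{ss2000}---so your proposal, which ultimately defers the ``combined reduction-and-bookkeeping'' to that same reference, is in substance the same approach. Your sketch of the internal architecture of \cite{ss2000} (monomial expansion giving the term count $A=\max\bigl(s,\sum_l\binom{s+\delta_l}{s}\bigr)$, membership in $S(\pi)$ as the non-vanishing-subsum condition, $G(\pi)=\{0\}$ forcing injectivity of $\mathbf{x}\mapsto(\al_i^{\mathbf{x}}/\al_j^{\mathbf{x}})$, and the quantitative Subspace Theorem as the counting engine) is a reasonable description of how that theorem is established.
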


\begin{proof}
The statement is Theorem 1 in \cite{ss2000}.
\end{proof}

\begin{lemma}\label{lem3.4}
Let $(U_n)_{n\geq 0}$ be a simple linear recurrence sequence of integers of order at least two. Further, assume that the roots of the characteristic polynomial $\alpha_i, (1\leq i \leq r)$ are pairwise multiplicatively independent. Then for $w= (w_k, \ldots, w_1)\in \Z^k, w_1, \ldots, w_k\neq 0$, there are finitely many tuples $(n_1, \ldots, n_k)\in \Z^k$ satisfying
  \begin{equation}\label{eq1lem5}
  w_k{U}_{n_k}+ \cdots+w_1{U}_{n_1} =0
  \end{equation} and \begin{equation}\label{neqeq155}
w_k\alpha_{i}^{n_k}+ w_{k-1}\alpha_i^{n_{k-1}}+\cdots + w_1\alpha_i^{n_1} \neq 0, \quad (1\leq i \leq r),
\end{equation}
 with
 \begin{equation}\label{eq2lem5}
 \sum_{h \in J}w_h{U}_{n_h} \neq 0
 \end{equation}
for each proper non-empty subset  $J$ of $\{1, \ldots, k\}$.
\end{lemma}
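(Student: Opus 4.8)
The plan is to substitute the Binet-type expansion \eqref{eq6a} into \eqref{eq1lem5} and to read the resulting identity as a polynomial-exponential equation of the shape \eqref{smith}, to which Lemma \ref{gp} can be applied. Since the sequence is simple of order $k$ we have $r=k$ and $U_n=\eta_1\alpha_1^n+\cdots+\eta_k\alpha_k^n$ with all $\eta_i\neq 0$, so \eqref{eq1lem5} becomes
\[
\sum_{i=1}^{k}\sum_{j=1}^{k}\eta_i w_j\,\alpha_i^{n_j}=0 .
\]
I regard this as an instance of \eqref{smith} in the $s=k$ integer variables $\mathbf{x}=(n_1,\dots,n_k)$, with $r=k^2$ terms indexed by pairs $(i,j)$: the coefficient attached to $(i,j)$ is the nonzero constant $\eta_i w_j$ (so every $\delta_l=0$), and the associated base vector carries $\alpha_i$ in the $j$-th slot and $1$ elsewhere, so that $\boldsymbol{\alpha}_{(i,j)}^{\mathbf{x}}=\alpha_i^{n_j}$. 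For $(i,j)\neq(i',j')$ these base vectors are genuinely distinct (either the nonzero slot sits in a different position, or it carries a distinct root), so no two terms collapse into one.

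By the theory behind Lemma \ref{gp}, the solution set of the displayed equation is the union over all partitions $\pi$ of the index set of the $k^2$ terms of the sets $S(\pi)$, and every solution lies in at least one $S(\pi)$ (starting from the one-block partition, keep refining as long as all block sums continue to vanish; this terminates). As there are only finitely many partitions $\pi$, and as $S(\pi)$ is finite whenever $G(\pi)=\{0\}$ by Lemma \ref{gp}, it suffices to prove the following reduction: if $(n_1,\dots,n_k)$ satisfies \eqref{eq1lem5}, \eqref{neqeq155} and \eqref{eq2lem5} and lies in $S(\pi)$, then necessarily $G(\pi)=\{0\}$.

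This reduction is the heart of the matter, and I expect it to be the main obstacle. Suppose, for contradiction, that $\mathbf{z}=(z_1,\dots,z_k)\in G(\pi)$ is nonzero, and set $F=\{\,j: z_j\neq 0\,\}\neq\emptyset$. By the definition of $G(\pi)$, whenever $(i,j)$ and $(i',j')$ lie in the same block one has $\alpha_i^{z_j}=\alpha_{i'}^{z_{j'}}$. If $i\neq i'$, pairwise multiplicative independence of the roots forces $z_j=z_{j'}=0$; if $i=i'$, then $\alpha_i^{z_j-z_{j'}}=1$, and since no root is a root of unity (a consequence of pairwise multiplicative independence) we get $z_j=z_{j'}$. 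Hence any block containing a term with column index $j\in F$ must consist solely of terms lying in one single row and having column indices in $F$. Consequently the collection of all terms whose column index lies in $F$ is itself a union of entire blocks of $\pi$, and summing the corresponding (vanishing) block sums yields
\[
\sum_{j\in F}w_j U_{n_j}=\sum_{j\in F}w_j\sum_{i=1}^{k}\eta_i\alpha_i^{n_j}=0 .
\]

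It remains to extract the contradiction from this vanishing subsum. If $F$ is a proper nonempty subset of $\{1,\dots,k\}$, the displayed identity contradicts \eqref{eq2lem5} directly. If instead $F=\{1,\dots,k\}$, then every column index is free, so every block lies inside a single row; thus each row is a union of vanishing blocks, and summing the row indexed by $i$ gives $\eta_i\sum_{j}w_j\alpha_i^{n_j}=0$, whence $\sum_{j}w_j\alpha_i^{n_j}=0$, contradicting \eqref{neqeq155}. In either case $\mathbf{z}$ cannot exist, so $G(\pi)=\{0\}$. Combining this with Lemma \ref{gp} and the finiteness of the number of partitions establishes that \eqref{eq1lem5}, \eqref{neqeq155}, \eqref{eq2lem5} have only finitely many common solutions, as claimed.
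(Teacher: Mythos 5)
The paper offers no proof of Lemma \ref{lem3.4} at all: it is quoted verbatim from Proposition 4.1 of \cite{rout2022}. Your argument is a correct, self-contained proof, and it proceeds by exactly the machinery one would expect that reference to use: expand each $U_{n_j}$ via \eqref{eq6a}, read \eqref{eq1lem5} as a polynomial-exponential equation in $(n_1,\dots,n_k)$ with constant coefficients, and show that the non-vanishing hypotheses force $G(\pi)=\{0\}$ for every admissible partition so that Lemma \ref{gp} applies. The combinatorial core is sound: for a nonzero $\mathbf{z}\in G(\pi)$ with support $F=\{j:z_j\neq 0\}$, pairwise multiplicative independence (which, since the order is at least two, also rules out roots of unity) forces every block meeting a column in $F$ to lie in a single row with all its columns in $F$; summing blocks then produces either a vanishing proper subsum $\sum_{j\in F}w_jU_{n_j}=0$, contradicting \eqref{eq2lem5}, or vanishing row sums $\eta_i\sum_j w_j\alpha_i^{n_j}=0$, contradicting \eqref{neqeq155} since each $\eta_i\neq 0$ by the minimality convention. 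Two small points to fix. First, the lemma does not say $r=k$: $r$ indexes the roots and $k$ the summands, so the double sum has $rk$ terms, not $k^2$; nothing in your argument depends on $r=k$, so this is purely notational. Second, the assertion that every solution of \eqref{smith} lies in some $S(\pi)$ deserves one explicit sentence (take a partition that is maximal, under refinement, among those whose block sums all vanish at the given solution); you gesture at this and it is standard, but it is the only place where the reduction to Lemma \ref{gp} could be questioned.
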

\begin{proof}
See Proposition 4.1 in \cite{rout2022}.
\end{proof}
Now, we can give the proof of Theorem \ref{th1}.

\section{Proof of Theorem \ref{th1}}
Let $(U_n)_{n \geq 0}$ be a simple linear recurrence sequence of order
$k$ as given in \eqref{eq6a}, satisfying the assumptions of the statement. From Remark \ref{auxrem1}  and \eqref{eqmain}, the inclusion $U_{n_1}+U_{n_2} \in X \cup Y$ is equivalent with
\begin{equation}\label{n1n2}
B\be^m + C\ga^m = \sum_{i=1}^{k} \eta_i\al_i^{n_1} + \sum_{j=1}^{k} \eta_j\al_j^{n_2}
\end{equation}
for some $m \geq 0$. Note that by our convention on the minimality
on $k$ in \eqref{eq4}, here none of $\eta_i$ and $\al_i$ is zero; further, the degrees of the $\eta_i$ are zero. We shall show that \eqref{n1n2} has only finitely many solutions in $(n_1,n_2)$, whose number is effectively bounded in terms of $k, d, t$. This, in view of that $B,C, \be, \ga$ are coming from a finite set of at most $2c_2$ elements, implies our theorem.

We shall handle equation \eqref{n1n2} by Lemma \ref{gp}. For this, first we rewrite \eqref{n1n2} as 
\begin{equation}\label{uniteq}
\sum_{i=1}^{k} \eta_i\al_i^{n_1} + \sum_{\hat{i}=1}^{\hat{k}} \eta_{\hat{i}}\al_{\hat{i}}^{n_2}-B\be^m - C\ga^m = 0.
\end{equation}
Here we note that $\alpha_i = \alpha_{\hat{i}}$ and  $\eta_i= \eta_{\hat{i}}$ for all $1\leq i\leq k$. The $2k+2$ summands on the left hand side of \eqref{uniteq} will be parametrised by the symbols
\[1, 2, \ldots, k, \hat{1}, \ldots, \hat{k}, k+\hat{k}+1,k+\hat{k}+2.\]
Suppose that the positive integers $n_1, n_2, m$ are solutions of \eqref{uniteq}. Write ${\bf N} = (n_1,n_2,m)$,
\[h_i({\bf N})= \begin{cases} \eta_i, \quad \mbox{if}\quad i=1, \ldots k \\
 \eta_{\hat{i}}, \quad \mbox{if}\quad i=\hat{1}, \ldots, \hat{k} \\
-B, \quad \mbox{if} \quad i= k+\hat{k}+1\\
-C, \quad \mbox{if}\quad  i=k+\hat{k}+2
\end{cases}
\quad \mbox{and} \quad {\bf \delta}_i= \begin{cases} (\al_i,1,1), \quad \mbox{if}\quad i=1, \ldots, k, \\
(1,\al_i,1),\quad \mbox{if} \quad i=\hat{1}, \ldots, \hat{k} \\
(1,1,\be), \quad \mbox{if} \quad i=k+\hat{k}+1\\
(1,1,\ga), \quad \mbox{if}\quad  i= k+\hat{k}+2.
\end{cases}\]
Let $\pi$ be a partition of the set $\{1,\ldots ,k, \hat{1}, \ldots, \hat{k}, k+\hat{k}+1,k+\hat{k}+2\}$ such that we have
\begin{equation}\label{eq10}
\sum_{i\in \la}h_i({\bf N}){\bf \delta}_i^{N} = 0, \quad \la \in \pi
\end{equation}
but \eqref{eq10} does not hold for any proper refinement of $\pi$. Observe that all solutions $(n_1,n_2,m)$ of \eqref{uniteq} are solutions of \eqref{eq10} with some $\pi$.

Assume first that there is a subset $\la$ of  $\pi$ such that $\la \subseteq \{1, \ldots , k\}$. If $\la$ is singleton set, that is, $\la = \{i\}$,  and $(\pi)$ yields $\eta_i\alpha_i^{n_1} =0$, which is not possible as $\al_i \neq 0, \; (1 \leq i \leq k)$. So, $|\la| =1$ is not possible. Thus, $|\la| \geq 2$. If  we had $i \overset{\pi}{\sim} j$ with some $i\neq j$ then this yields $\alpha_i^{n_1}  = -\alpha_j^{n_1}$. Since the recurrence sequence $(U_n)_{n \geq 0}$ is non-degenerate, we have $G(\pi) = \{(0, 0, 0)\}$. Hence, by Lemma \ref{gp} we get an upper bound for the number of these values of $n_1$ in terms of $k$.

Similarly, if we assume there is a subset $\la$ of  $\pi$ such that $\la \subseteq \{\hat{1}, \ldots, \hat{k}\}$, then by proceeding as above  we get an upper bound for the number of these values of $n_2$ in terms of $k$.

Now, assume that $\la \subseteq \{1, \ldots , k, \hat{1}, \ldots, \hat{k}\}$ such that $i \in \{1, \ldots, k\}$ and  $\hat{i}\in \{\hat{1}, \ldots, \hat{k}\}$. Suppose $\la = \{i, \hat{j}\}$ with $i\neq j$. In this case we will get an equation of the form
\begin{equation}
\eta_i\alpha_i^{n_1} +\eta_{\hat{j}}\alpha_{\hat{j}}^{n_2}=0.
\end{equation}
Since by our assumption, $\alpha_i$'s are pair wise multiplicatively independent, we get $G(\pi) = \{(0, 0, 0)\}$. Hence, by Lemma \ref{gp} we get an upper bound for the number of these values of $n_1, n_2$ in terms of $k$. Next suppose that $\la = \{i, \hat{i}\}$. Further, if $\al_i^{n_1} + \al_{\hat{i}}^{n_2} \neq 0$, then by Lemma \ref{lem3.4}, we get an upper bound for the number of these values of $n_1, n_2$ in terms of $k$. On the other hand if $\al_i^{n_1} + \al_{\hat{i}}^{n_2} = 0$, then $\al_i$ will be a root of unity, which is a contradiction.

Suppose next that there is a subset $\la$ of $\pi$ such that $\la = \{k+\hat{k}+1, k+\hat{k}+2\}$. Since $\be/\ga$ is not a root of unity, we see that $G(\pi) = \{(0, 0, 0)\}$. So Lemma \ref{gp} yields an upper bound for the number of such values of $n_1, n_2$ in terms of $k$.

Thus, we are left with the case where $\pi$ consists of precisely two sets, say $\la_1$ and $\la_2$ with $k +\hat{k}+1 \in \la_1, k +\hat{k}+2\in \la_2$. Obviously, $|\la_1|, |\la_2| \geq 2$.
Assume that one of these sets, say $\la_1$, has more than two elements. Then there exists $i, j$ with $1 \leq i < j \leq k$ with $i, j \in \la_1$. Since $\al_i/\al_j$ is not a root of unity, we get that $G(\pi) = \{(0, 0, 0)\}$. Thus, Lemma \ref{gp} provides an upper bound in terms of $k$ for the number of these values of $n_1$ once again. That is, we may assume that $|\la_1| = |\la_2| = 2$ and hence, we consider the following cases:
\begin{align*}
&B\be^m=\eta_1\al_1^{n_1}\quad \mbox{and} \quad C\ga^m=\eta_2\al_2^{n_1}\\
&B\be^m=\eta_1\al_1^{n_1}\quad \mbox{and} \quad C\ga^m=\eta_{\hat 2}\al_{\hat{2}}^{n_2}\\
&B\be^m=\eta_{\hat 1}\al_{\hat{1}}^{n_2}\quad \mbox{and} \quad C\ga^m=\eta_{\hat 2}\al_{\hat{2}}^{n_2}\\
&B\be^m=\eta_{\hat 1}\al_{\hat{1}}^{n_2}\quad \mbox{and} \quad C\ga^m=\eta_2\al_2^{n_1}.
\end{align*}
Consider the first case 
\begin{equation}\label{eq44}
B\be^m=\eta_1\al_1^{n_1} \quad \mbox{and} \quad C\ga^m=\eta_2\al_2^{n_1}.
\end{equation} 
In this case, we see that the characteristic polynomial of $(V_n)$ has precisely two distinct roots. In particular, $\al_1, \al_2$ are either rational, or conjugated quadratic algebraic numbers. If in \eqref{eq44} we have $G(\pi) = \{(0, 0, 0)\}$, then we can bound the number of solutions in the usual way. So we may assume that 
$G(\pi) \neq \{(0, 0, 0)\}$. Thus, there exists $r, s\in \Z$ for which
\begin{equation}\label{eq45}
\be^r=\al_1^s \quad \mbox{and} \quad \ga^r=\al_2^s.
\end{equation} 
If $\alpha_1, \alpha_2\in \Q$, then taking conjugates in $K= \Q(\beta) =\Q(\gamma)$, we get that $\alpha_1=\alpha_2$, which is a contradiction. Thus,  $\alpha_1, \alpha_2$ are conjugated quadratic integers. This implies that $\alpha_1, \alpha_2\in K$. Since $\beta$ and $\gamma$ are units in $\Z[\sqrt{d}]$, then $\alpha_1, \alpha_2$ are units in the ring of integers $\mathcal{O}_{K}$. Then multiplying the left and right hand side of \eqref{eq44}, we get
\begin{equation}
BC = (-1)^{n_1}\eta_1\eta_2.
\end{equation}
Hence, the characteristic polynomial of $(V_n)$ is
\[T(x) := (x - \al_1)(x - \al_2) \in \Z[x].\]
However, as the roots of $T(x)$ are units of $\mathcal{O}_{K}$, its constant term is $\pm 1$ and the square-free part of its discriminant equals $d$, which contradicts our assumption.

Now, consider the second case 
\begin{equation}\label{eq46}
B\be^m=\eta_1\al_1^{n_1} \quad \mbox{and} \quad C\ga^m=\eta_{\hat 2}\al_{\hat{2}}^{n_2}.
\end{equation} 
Similarly, we may assume that $G(\pi) \neq \{(0, 0, 0)\}$. Thus, there exists $r, s_1, s_2\in \Z$ for which
\begin{equation}\label{eq47}
\be^r=\al_1^{s_1} \quad \mbox{and} \quad \ga^r=\al_{\hat 2}^{s_2}.
\end{equation} 
Again if $\alpha_1, \alpha_{\hat 2}\in \Q$, then taking conjugates in $K= \Q(\beta) =\Q(\gamma)$, we get that $\alpha_1^{s_1}=\alpha_{\hat 2}^{s_2}$, which is a contradiction as $\alpha_1$ and $\alpha_{\hat 2}$ are multiplicatively independent. Thus,  $\alpha_1, \alpha_{\hat 2}$ are conjugated quadratic integers. Similarly proceeding as above, we will get a contradiction. The other two cases we can proceed similarly and this completes the proof of the theorem.

{\bf Data Availability Statements} Data sharing not applicable to this article as no datasets were generated or analyzed during the current study.

{\bf Acknowledgment:} The first author's work is supported by CSIR fellowship (File no: 09/983(0036)/2019-EMR-I). R.P. and S.S.R. supported by a grant from National Board for Higher Mathematics (NBHM), Sanction Order No: 14053. Furthermore, S.S.R. is partially supported by grant from Science and Engineering Research Board (SERB)(File No.:CRG/2022/000268).  This paper started during first author's visit to the Department of Mathematics, NIT Calicut, India. 

{\bf Declarations}

{\bf Conflict of interest} On behalf of all authors, the corresponding author states that there is no Conflict of interest.

\end{document}